\documentclass{article}
\usepackage{amsmath}
\usepackage{amsthm}
\usepackage{amssymb}

\theoremstyle{plain}
\newtheorem{thm}{Theorem}
\newtheorem{prop}[thm]{Propositon}
\newtheorem{cor}[thm]{Corollary}

\theoremstyle{definition}

\newtheorem{exmp}[thm]{Example}

\theoremstyle{remark}

\def\AA{\mathcal{A}}

\def\NN{\mathcal{N}}
\def\RR{\mathcal{R}}

\title{
Natural generalized invertibility and prescribed idempotents
}
\author{G. Kant\'un-Montiel\footnote{This research was partially supported by CONACYT, Mexico, under grant CB-2011-01.}}
\date{}

\begin{document}
\maketitle
\begin{abstract}
We study the natural inverse introduced by X. Mary and show some connections with the $(p,q)$-inverses of Djordjevic and Wei, where $p$ and $q$ are prescribed idempotents. We deal first with rings with identity and then specialize to the particular case of the algebra of bounded linear operators. We give a characterization of the set of operators along which an operator is natural invertible
in terms of prescribed range and nullspace. Finally, the special case when the prescribed idempotent is the spectral projection is discussed.
\end{abstract}

{\bf Keywords:} Outer generalized inverse, natural inverse, spectral projection.

{\bf AMS 2010 Mathematics Subject Classification:} 15A09, 47A05, 47A25.

\section{Introduction}

Several generalizations of invertibility, such as Moore-Penrose, Drazin and group inverses, are special types of outer inverses. More recently, Xavier Mary has introduced a class of outer inverses (\cite{Ma11}), which is defined below. Subsequent sections are devoted to its study.
While there are useful applications of these generalized inverses for matrices and bounded linear operators, working in the more general setting of rings can sometimes lead to a better understanding. 
In this paper we will work first in the setting of rings and then specialize to the operators case. 

Let $\AA$ be a ring with identity $1$. Recall an element $p\in \AA$ is called idempotent if $p=p^2$.

Let $a\in \AA$. As usual, $a$ is invertible  if there exists $b\in \AA$ such that $ab=ba=1$. We denote the  inverse of $a$ by $a^{-1}$.

We call $a$ an inner regular element if there exists $b\in \AA$ such that
\begin{equation} \label{eqnIR}
a=aba.
\end{equation}
Also, the element $b$ is called an inner inverse for $a$ and we will denote it by $a^{(1)}$. Note that $ab$ and $ba$ are idempotents.

If for $a$ there exists $b\in \AA$ satisfying
\begin{equation} \label{eqnOR}
b=bab,
\end{equation}
then we say that $a$ is outer regular and $b$ is an outer inverse for $a$, denoted by $a^{(2)}$. Of course, if $a$ is outer regular with outer inverse $b$, then $b$ is inner regular. 
If (\ref{eqnIR}) and (\ref{eqnOR}) holds, then we say that $b$ is a reflexive inverse for $a$, which will be denoted by $a^{(1,2)}$. It is easy to check that if $a$ is inner regular, then $bab$ is a reflexive inverse for $a$. 

Neither inner, outer or reflexive inverses are unique. For uniqueness we have to require commutativity: an element $a\in \AA$ is group invertible if there exists $b\in \AA$ such that
\begin{equation}
a=aba, \quad b=bab, \quad ab=ba.
\end{equation}
We denote the group inverse of $a$ by $a^\sharp$. The group inverse is unique if it exists. 

As we pointed out above, if $a$ is inner regular, then it is outer regular, so further generalizations should weaken inner regularity while requiring outer regularity. An element $a\in \AA$ is Drazin invertible if for some $n\in \mathbb{N}$ there exists $b\in \AA$ such that
\begin{equation} \label{eqnDrazin}
a^n=a^nba, \quad b=bab, \quad ab=ba.
\end{equation} 
We have that $a$ is Drazin invertible if and only if $a^n$ is group invertible for some $n\in \mathbb{N}$. The Drazin inverse is unique if it exists.

 Interested as we are in having uniqueness, we still want to drop inner invertibility. An element $a\in \AA$ is $(p,q)$-invertible, in the sense of Djordjevic and Wei, if there exist $b\in \AA$ and idempotents $p, q\in \AA$ such that
\begin{equation}
bab= b, \quad ba=p, \quad 1-ab=q.
\end{equation}
The $(p,q)$-inverse is unique if it exists and we will refer to it as the Djordjevic-Wei $(p,q)$-inverse, denoted by $a^{(2)}_{p,q}$.

The interested reader may find more information on the inverses defined above in \cite{DjRBook}.

For left principal ideals, we write $a\AA \subset b\AA$ if there exists $x\in \AA$ such that $a=bx$. In the same way, $a\AA  \supset b\AA $ if there exists $y\in \AA$ such that $ay=b$. Finally, $a\AA = b \AA$ if $a\AA \subset b\AA$ and $a\AA \supset b\AA$. The case of right principal ideals is analogous.

An element $a\in \AA$ is natural invertible along $d\in \AA$, in the sense of Mary, if there exists $b\in \AA$ such that
\begin{equation}
bab=b, \quad b\AA = d \AA, \quad \AA b = \AA d
\end{equation}
From now on we will write $a^{\parallel d}$ for the natural inverse of $a$ along $d$ and refer to it as the Mary inverse of $a$ along $d$. It turns out that the Mary inverse is unique if it exists.

In this paper we characterize, in the next section,  Mary invertible elements in terms of Djordjevic-Wei $(p,q)$-invertibility. Then, in Section 3, we deal with the particular case of the algebra of bounded  linear operators. Finally, in section 4, we discuss a little on spectral theory.

\section{Outer inverses with prescribed idempotents}

In a series of papers, X. Mary investigated the properties of the natural inverse along an element and proved the following:
\begin{thm}[{\cite[Theorem 7]{Ma11}}] \label{thmMa}
An element $a\in \AA$ is Mary invertible along $d$ if and only if $\AA d\subset \AA ad$ and $(ad)^\sharp$ exists.
In this case, $a^{\parallel d}=d(ad)^\sharp$.
\end{thm}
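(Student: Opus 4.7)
The plan is to prove both directions, starting with the constructive $(\Leftarrow)$ direction, which will simultaneously yield the formula $a^{\parallel d}=d(ad)^\sharp$. Assume $\AA d\subset\AA ad$---equivalently, $d=yad$ for some $y\in\AA$---and that $(ad)^\sharp$ exists. Set $b:=d(ad)^\sharp$ and verify the three defining properties of Mary invertibility along $d$. First, $bab=b$ by direct expansion using the group inverse identity $(ad)^\sharp(ad)(ad)^\sharp=(ad)^\sharp$. For $b\AA=d\AA$, the inclusion $b\AA\subset d\AA$ is immediate from $b\in d\AA$; for the reverse, set $e:=ad(ad)^\sharp=(ad)^\sharp ad$, and the hypothesis $d=yad$ yields $de=y(ad(ad)^\sharp ad)=y\cdot ad=d$, so $b\cdot ad=d\cdot e=d\in b\AA$. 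For $\AA b=\AA d$, use $(ad)^\sharp=((ad)^\sharp)^2 (ad)$ to write $b=[d((ad)^\sharp)^2 a]\cdot d\in\AA d$; conversely, $ab=e$ combined with $ad\cdot e=ad$ gives $ada\cdot b=ad$, so $d=yad=y(ada)\cdot b\in\AA b$. Uniqueness of the Mary inverse then forces $a^{\parallel d}=d(ad)^\sharp$.

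For $(\Rightarrow)$, let $b:=a^{\parallel d}$ and derive the two hypotheses. Since $d\in b\AA$, write $d=bs$ for some $s\in\AA$; then $d=bs=(bab)s=ba(bs)=ba\cdot d=b\cdot ad$, so $d=bad\in\AA ad$. A symmetric argument using $b\in\AA d$ gives $d=dab$, and together these yield $ab\cdot ad=ad=ad\cdot ab$, making the idempotent $ab$ a two-sided multiplicative identity on $ad$. It remains to construct $(ad)^\sharp$. Choose $t,u\in\AA$ with $dt=b=ud$ (available from $b\AA=d\AA$ and $\AA b=\AA d$); then the symmetric identities $adt=ab=aud$ hold. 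The plan is to assemble a candidate $c$ for $(ad)^\sharp$ from $t$, $u$ and $ab$---a natural first guess being $c:=abt$ or some symmetrized variant---and verify the three group inverse axioms $(ad)c(ad)=ad$, $c(ad)c=c$, and $(ad)c=c(ad)$.

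The inner- and reflexive-inverse axioms fall out of $adt=ab$ and $ab\cdot ad=ad$, but the commutativity axiom $(ad)c=c(ad)$ is the main obstacle: one-sided candidates like $c=t$ satisfy $(ad)c=adt=ab$ but $c(ad)=tad$, and there is no reason a priori for $tad$ to equal $ab$. Overcoming this requires the left-ideal datum $b\in\AA d$ (producing $u$) to be used in tandem with the right-ideal datum $b\in d\AA$ (producing $t$), essentially exploiting that $ad$ lies in the corner ring $(ab)\AA(ab)$ and must be shown to be invertible there. Once $(ad)^\sharp$ is produced, the formula $a^{\parallel d}=d(ad)^\sharp$ follows from the $(\Leftarrow)$ direction and the uniqueness of the Mary inverse.
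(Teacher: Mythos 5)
The paper itself offers no proof of this statement --- it is quoted verbatim from Mary \cite[Theorem 7]{Ma11} --- so there is nothing in-text to compare against; I can only assess your argument on its own terms. Your $(\Leftarrow)$ direction is complete and correct: all three defining properties of $b=d(ad)^\sharp$ are verified with explicit computations, and the identities $b\cdot ad=d$ and $(ada)b=ad$ are exactly the right levers for the two nontrivial ideal inclusions. In the $(\Rightarrow)$ direction, the derivation of $\AA d\subset\AA ad$ (via $d=bad$) and of the auxiliary identities $d=dab$, $ab\cdot ad=ad=ad\cdot ab$ is also correct.

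The genuine gap is the existence of $(ad)^\sharp$ in the $(\Rightarrow)$ direction, which you explicitly leave open: you name candidate inverses, observe that the commutativity axiom is ``the main obstacle,'' and describe what ``overcoming this requires'' without actually doing it. As written, the forward implication is unproved. The missing idea is to avoid guessing $(ad)^\sharp$ directly and instead invoke the standard criterion that $x^\sharp$ exists if and only if $x\in x^2\AA\cap\AA x^2$. Both memberships follow from identities you already have: from $b=dt$ one gets $ab=(ad)t$, hence $ad=(ad)(ab)=(ad)^2t\in(ad)^2\AA$; from $b=ud$ and $d=b(ad)$ one gets $ab=au\,d=aub(ad)$, hence $ad=(ab)(ad)=aub(ad)^2\in\AA(ad)^2$. (If you prefer an explicit formula, writing $ad=(ad)^2s=t'(ad)^2$ yields $(ad)^\sharp=t'(ad)s$ by a routine verification.) With that, $(ad)^\sharp$ exists, and the formula $a^{\parallel d}=d(ad)^\sharp$ then follows from your $(\Leftarrow)$ direction together with uniqueness, as you say.
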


On the other hand, Djordjevic and Wei proved:
\begin{thm}[{\cite[Theorem 2.2]{DjW05}}] \label{thmDjW}
Let $a,c\in \AA$, and suppose $p,q\in \AA$ are idempotents such that there exist $a^{(2)}_{p,q}$ and $c^{(1,2)}_{1-q,1-p}$. Then $ac$ is group invertible and $a^{(2)}_{p,q}=c(ac)^\sharp$.
\end{thm}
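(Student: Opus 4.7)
The plan is to construct an explicit group inverse for $ac$ from the given data and then read off the formula for $a^{(2)}_{p,q}$. Write $b := a^{(2)}_{p,q}$ and $d := c^{(1,2)}_{1-q,1-p}$. Unpacking the definitions, $b$ satisfies
\[ bab = b, \quad ba = p, \quad ab = 1-q, \]
while $d$ satisfies
\[ cdc = c, \quad dcd = d, \quad dc = 1-q, \quad cd = p. \]

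The decisive observation is that the prescribed idempotents force the two cross-identities
\[ ba = cd \ (= p) \qquad \text{and} \qquad ab = dc \ (= 1-q). \]
Given this coincidence, the natural candidate is $(ac)^{\sharp} := db$. I would verify the three group-inverse axioms by mechanical substitution: using $cd = ba$ and the fact that $ab = 1-q$ is idempotent,
\[ (ac)(db) = a(cd)b = a(ba)b = (ab)^{2} = ab, \]
and symmetrically $(db)(ac) = d(cd)c \cdot \text{(rearranged)} = (dc)^{2} = dc$; commutativity then follows from $ab = dc$. The inner and outer conditions $(ac)(db)(ac) = ac$ and $(db)(ac)(db) = db$ collapse in the same way, using once more the cross-identities together with $cdc = c$ and $bab = b$ (e.g.\ $(ac)(db)(ac) = ab \cdot ac = a(bac) = a(pc) = a(cdc) = ac$).

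With $(ac)^{\sharp} = db$ established, the asserted equality is a one-line computation:
\[ c(ac)^{\sharp} = c \cdot db = (cd)b = pb = (ba)b = bab = b = a^{(2)}_{p,q}. \]
I do not anticipate a real obstacle; the entire argument is driven by the pair of identities $ba = cd$ and $ab = dc$, and recognizing that the prescribed-idempotent conditions impose exactly this matching between $b$ and $d$ is the one step that requires thought. Everything afterwards is forced.
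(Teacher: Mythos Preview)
Your argument is correct. The key insight---that the prescribed-idempotent hypotheses force $ba=cd=p$ and $ab=dc=1-q$, after which $db$ can be directly verified to satisfy the three group-inverse axioms for $ac$---is exactly right, and your final computation $c(ac)^\sharp = cdb = pb = bab = b$ is clean. (Your sketch of $(db)(ac)=dc$ is slightly elliptical; written out it is $d(ba)c = d(cd)c = (dc)^2 = dc$, and the outer axiom reads $(db)(ac)(db) = dc\cdot db = d(cd)b = d(ba)b = d(bab) = db$.)

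Note, however, that the paper does not actually prove this theorem: it is quoted as \cite[Theorem 2.2]{DjW05} and used as a black box in the proof of Theorem~\ref{thmMaryDjW}. So there is no ``paper's own proof'' to compare against here. Your argument is essentially the standard one from the Djordjevi\'c--Wei source.
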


Notice that the existance of $c^{(1,2)}_{1-q,1-p}$ means that there exists $t\in \AA$ such that $c=ctc$, $t=tct$, $ct=p$ and $tc=1-q$.


We can characterize
the Mary inverse as an outer inverse with prescribed idempotents.

\begin{thm} \label{thmMaryDjW}
Let $\AA$ be a ring and $a,b,d\in \AA$. The following statements are equivalent:
\begin{enumerate} 
\item $b$ is the Mary inverse of $a$ along $d$. \label{thmPIa}
\item $b$ is an outer inverse for $a$, $d$ is an inner regular element and there exists a reflexive inverse $t\in \AA$ of $d$ such that $ba=dt$ and $ab=td$. \label{thmPIb}
\end{enumerate}
\end{thm}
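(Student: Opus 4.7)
My plan is to leverage Theorem \ref{thmMa}, which gives the explicit formula $a^{\parallel d} = d(ad)^\sharp$, to pin down an explicit candidate for $t$; the rest of the argument is a symmetric manipulation of the identities that define the Mary inverse and those that define a $(p,q)$-inverse.

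For $(\ref{thmPIa}) \Rightarrow (\ref{thmPIb})$, set $s := (ad)^\sharp$, so $b = ds$ and the group-inverse axioms give $ads = sad$, $sads = s$, and $adsad = ad$. I take as candidate $t := sa$. Then immediately $dt = dsa = ba$ and $td = sad = ads = ab$, using the commutation of $ad$ with $s$. To see $t$ is a reflexive inverse of $d$, I invoke the other half of Mary's characterization, namely $\AA d \subset \AA ad$: writing $d = zad$ for some $z \in \AA$, I get $dtd = zad \cdot sad = z(adsad) = zad = d$, and $tdt = sad \cdot sa = (sads)a = sa = t$. In particular $d$ is inner regular. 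The outer condition $bab=b$ is part of the definition of the Mary inverse, so $b$ is an outer inverse of $a$.

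For $(\ref{thmPIb}) \Rightarrow (\ref{thmPIa})$, I need only to deduce the principal-ideal equalities, since $bab = b$ is already assumed. From $b = bab = (ba)b = dtb$ I obtain $b \in d\AA$, and from $d = dtd = (ba)d = b(ad)$ I obtain $d \in b\AA$, giving $b\AA = d\AA$. Symmetrically, $b = b(ab) = btd$ shows $\AA b \subset \AA d$, and $d = d(td) = (da)b$ shows $\AA d \subset \AA b$, completing $\AA b = \AA d$.

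The only delicate step is choosing the correct $t$ in the first direction. Once one recognizes that the Mary inverse factors through the group inverse of $ad$ via $b = d \cdot (ad)^\sharp$, the ansatz $t = (ad)^\sharp a$ is essentially forced by the target relations $dt = ba$ and $td = ab$; everything else reduces to routine application of the group-inverse identities and of the inclusion $\AA d \subset \AA ad$ from Theorem \ref{thmMa}.
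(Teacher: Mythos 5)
Your proof is correct. The forward direction is essentially the paper's own argument: the same ansatz $t=(ad)^\sharp a$, the same four verifications; the only cosmetic difference is that you justify $dtd=d$ by writing $d=zad$ explicitly from the inclusion $\AA d\subset \AA ad$, where the paper compresses this into the identity $bad=d$. The converse is where you genuinely diverge, and for the better in terms of economy: the paper re-imports Theorem \ref{thmDjW} to conclude that $ad$ is group invertible with $b=d(ad)^\sharp$, and then goes back through Theorem \ref{thmMa}, needing one extra computation to check $\AA d\subset \AA ad$. You instead verify the definition of the Mary inverse directly, reading off $b\in d\AA$ from $b=(ba)b=d(tb)$, $d\in b\AA$ from $d=(dt)d=b(ad)$, and the two left-ideal inclusions symmetrically from $ab=td$; together with the assumed $bab=b$ and the uniqueness of the Mary inverse this closes the argument without ever mentioning the group inverse of $ad$. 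What the paper's route buys is the explicit identification $b=d(ad)^\sharp$ and the link to the Djordjevic--Wei framework, which it wants for the corollary that follows; what your route buys is a self-contained, purely equational proof of the converse that does not depend on Theorem \ref{thmDjW} at all (and in fact only uses $dtd=d$, not the full reflexivity of $t$).
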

\begin{proof}
$(\ref{thmPIa}) \Rightarrow (\ref{thmPIb})$
Suppose $b=a^{\parallel d}$. From Theorem \ref{thmMa} we have that $b=d(ad)^\sharp$ and it follows $b=bab$. Let $t=(ad)^\sharp a$, then
$$tdt=(ad)^\sharp ad (ad)^\sharp a = (ad)^\sharp a=t,$$
$$dtd=d(ad)^\sharp ad=bad=d,$$
$$ba=d(ad)^\sharp a=dt,$$
$$ab= ad (ad)^\sharp =(ad)^\sharp ad =td.$$
Hence, \ref{thmPIb}. is satisfied.

$(\ref{thmPIb}) \Rightarrow (\ref{thmPIa})$
Suppose $b$ is an outer inverse for $a$ and $t$ is a reflexive inverse for $d$ such that $ba=dt$ and $td=ab$. From Theorem \ref{thmDjW} we have that $b=d(ad)^\sharp$. Now, by Theorem \ref{thmMa}, we only have to show $\AA d \subset \AA ad$. But this is true since
$$d=dtd=dab=dad(ad)^\sharp=d(ad)^\sharp ad.$$
Hence, $b$ is the Mary inverse of $a$ along $d$.
\end{proof}

\begin{cor}
If $a$ is Mary invertible along $d$, then it is Djordjevic-Wei $(p,q)$-invertible with $p=dt$ and $q=1-td$ where $t=(ad)^\sharp a$.
\end{cor}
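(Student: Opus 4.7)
The plan is to reduce everything to the previous theorem. Since $a$ is Mary invertible along $d$, Theorem \ref{thmMaryDjW} gives us immediately that $b = a^{\parallel d}$ is an outer inverse for $a$ (so $bab = b$ is already in hand), that $d$ has the reflexive inverse $t = (ad)^\sharp a$, and that $ba = dt$ and $ab = td$. These are exactly the three defining relations needed for $(p,q)$-invertibility with the prescribed choices $p = dt$ and $q = 1 - td$, once we verify that $p$ and $q$ are in fact idempotents.

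So the only new work is the idempotency check. For $p = dt$, I would compute $p^2 = dtdt = (dtd)t = dt = p$, using the relation $dtd = d$ from $t$ being a reflexive inverse of $d$. For $q = 1 - td$, I would expand $q^2 = 1 - 2td + (td)^2 = 1 - 2td + t(dtd) = 1 - 2td + td = 1 - td = q$, again using $dtd = d$. (The other reflexive identity $tdt = t$ is not needed here, though it is available.)

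With $bab = b$, $ba = dt = p$, and $1 - ab = 1 - td = q$ all in place and $p, q$ confirmed to be idempotents, the definition of Djordjevic-Wei $(p,q)$-invertibility is satisfied and the conclusion follows. I do not anticipate a genuine obstacle: the corollary is essentially a repackaging of Theorem \ref{thmMaryDjW}, with the minor addition of checking that the two elements built from $d$ and $t$ really are idempotents, which is automatic from the reflexive-inverse identity.
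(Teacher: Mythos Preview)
Your proposal is correct and follows essentially the same route as the paper's proof, which simply invokes Theorem~\ref{thmMaryDjW} to read off $bab=b$, $ba=dt$, and $1-ab=1-td$. Your added idempotency check is fine but in fact unnecessary: once $bab=b$ holds, $(ba)^2=bab\cdot a=ba$ and $(ab)^2=a\cdot bab=ab$ automatically, so $p=ba$ and $q=1-ab$ are idempotent without appealing to $dtd=d$.
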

\begin{proof}
Suppose $a$ is Mary invertible along $d$, and let $b=a^{\parallel d}$. By Theorem \ref{thmMaryDjW}, $b=a^{2}_{dt,1-td}$ and $a$ is Djordjevic-Wei $(dt,1-td)$-invertible.

\end{proof}

\begin{exmp}
Let $\text{Mat}_2$ be the ring of $2\times 2$ matrices with real entries, and consider $A,D_a\in \text{Mat}_2$
$$A:=
\begin{bmatrix}
0&1\\
0&0
\end{bmatrix},
\qquad
D_a:=
\begin{bmatrix}
2a&0\\
a&0
\end{bmatrix},
$$
with $a\in \mathbb{R}$, $a\neq 0$ fixed.
It is clear that $A$ is not invertible, and using Theorem \ref{thmMa} we see that $A$ is Mary invertible along $D_a$.
Indeed,  a direct computation shows that 
$(AD_a)^\sharp = 
\left[ \begin{smallmatrix}
1/a&0\\
0&0
\end{smallmatrix} \right]
 $
and if 
$S:=\left[ \begin{smallmatrix}
2&0\\
1&0
\end{smallmatrix} \right]
 $
, we have $D_a=SAD_a$.

Notice that 
the Djordjevic-Wei projections $P,Q\in \text{Mat}_2$ are the same for every choice of $a\neq 0$.
Indeed, by the proof of the theorem above, let $T:=(AD_a)^\sharp A=
\left[ \begin{smallmatrix}
1&0\\
0&0
\end{smallmatrix} \right]
$
, then $T$ is a reflexive inverse for $D_a$, and $P=D_aT=
\left[ \begin{smallmatrix}
0&2\\
0&1
\end{smallmatrix} \right]
$ and $Q=I-TD_a=
\left[ \begin{smallmatrix}
0&0\\
0&1
\end{smallmatrix} \right]
$.

$D_a$ is not an outer inverse for $A$. In fact, the Mary inverse along $D_a$ for any choice of $a\neq 0$ is
$$
B=D(AD)^\sharp=
\begin{bmatrix}
2&0\\
1&0
\end{bmatrix},
$$
which coincides with the Djordjevic-Wei $(P,Q)$-inverse (see \cite[Theorem 2.1]{DjW05}).
\end{exmp}


\section{Outer inverses with prescribed range and nullspace}

Let $X$ be a Banach space.
If $M$ and $N$ are  subspaces of $X$ such that $M\cap N=\{0\}$ and $M+N=X$, we  say that $M$ is complemented with complement $N$ (also $N$ is complemented with complement $M$). If in addition $M$ and $N$ are closed, then we write  $X=M\oplus N$.

Let $B(X)$ denote the set (algebra) of bounded linear operators on $X$. 
For an operator $A\in B(X)$, we will denote by $\RR(A)$ the range of $A$ and by $\NN(A)$ the nullspace of $A$.

Any nonzero operator has a nonzero outer generalized inverse.
On the other hand,
an operator $A\in B(X)$ is inner regular if and only if $\NN(A)$ and $\RR(A)$ are closed and complemented subspaces of $X$.
In this case, if $M$ and $N$ are such that
$X=N\oplus \NN(A)$ and $X=\RR(A)\oplus M$, then $A$ has the following matrix form:
$$A= 
\begin{bmatrix}
A_1&0\\
0&0
\end{bmatrix}
: 
\begin{bmatrix}
N\\
\NN(A)
\end{bmatrix}
\rightarrow
\begin{bmatrix}
\RR(A)\\
M
\end{bmatrix},
$$
where $A_1$ is invertible. Furthermore, if $B\in B(X)$ is an inner inverse of $A$ such that $\RR(BA)=N$ and $\NN(AB)=M$, then $B$ has the following matrix form:
$$B= 
\begin{bmatrix}
A_1^{-1}&0\\
0&W
\end{bmatrix}
: 
\begin{bmatrix}
\RR(A)\\
M
\end{bmatrix}
\rightarrow
\begin{bmatrix}
N\\
\NN(A)
\end{bmatrix}.
$$
where $W$ is an arbitrary bounded linear operator from $M$ to $\NN(A)$. Thus, an inner inverse is not unique even if we fix its range and nullspace. However, a reflexive inverse is uniquely determined by an appropiate choice of range and nullspace.



Now we show that the Mary inverse of an operator is an outer inverse with prescribed range and nullspace.

\begin{thm} \label{thmMaryOuter}
Let $A,T\in B(X)$ be nonzero operators. The following statements are equivalent:
\begin{enumerate}
\item $B$ is the Mary inverse of $A$ along $T$. \label{thmMO1}
\item $B$ is an outer inverse of $A$ such that $\RR(B)=\RR(T)$ and $\NN(B)=\NN(T)$. \label{thmMO2}
\end{enumerate}
\end{thm}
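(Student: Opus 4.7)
My plan is to prove the two implications separately. For $(\ref{thmMO1})\Rightarrow(\ref{thmMO2})$, I would simply unpack the Mary conditions: the outer regularity $BAB=B$ is already hypothesized; the equality $B\cdot B(X)=T\cdot B(X)$ produces $U_1,U_2\in B(X)$ with $B=TU_1$ and $T=BU_2$, yielding $\RR(B)=\RR(T)$, and dually $B(X)\cdot B=B(X)\cdot T$ produces $V_1,V_2$ with $B=V_1T$ and $T=V_2B$, forcing $\NN(B)=\NN(T)$.

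For the converse $(\ref{thmMO2})\Rightarrow(\ref{thmMO1})$ the plan is more elaborate, since I have to construct genuine bounded operators from subspace data. I would first observe that $BAB=B$ makes $BA$ and $AB$ idempotents, with $\RR(BA)=\RR(B)$ and $\NN(AB)=\NN(B)$; the less trivial inclusions follow from $B=BAB$ for the range, and from the implication $ABx=0\Rightarrow Bx=BABx=0$ for the nullspace. Next, since $B=BAB$ exhibits $A$ as an inner inverse of $B$, the operator $B$ is inner regular, so by the characterization recalled just before the theorem $\RR(B)$ and $\NN(B)$ are closed and complemented; by the hypothesis this transfers to $\RR(T)$ and $\NN(T)$, so $T$ itself is inner regular. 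I can then fix a reflexive inverse $S\in B(X)$ of $T$, and the idempotents $TS$ and $ST$ will satisfy $\RR(TS)=\RR(T)$ and $\NN(ST)=\NN(T)$ (the latter from $TST=T$).

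With these four projections in hand I can read off the factorizations needed for the Mary conditions. Since $BA$ is the identity on $\RR(BA)=\RR(T)$, $BAT=T$, so $T=B(AT)$; since $TS$ is the identity on $\RR(TS)=\RR(B)$, $TSB=B$, so $B=T(SB)$; these give $B\cdot B(X)=T\cdot B(X)$. Dually, $AB$ annihilates $\NN(AB)=\NN(T)$, yielding $T=TAB=(TA)B$, and $ST$ annihilates $\NN(ST)=\NN(B)$, yielding $B=BST=(BS)T$, producing $B(X)\cdot B=B(X)\cdot T$. Together with the already-given $BAB=B$, these make $B$ the Mary inverse of $A$ along $T$.

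The main obstacle will be the middle step of the converse: the hypotheses only supply equalities of subspaces, whereas the Mary conditions require equalities of principal ideals, that is, actual bounded operators realizing the containments. Producing those operators hinges on having a reflexive inverse of $T$ available, and the only route to one I see is to first upgrade the subspace data to inner regularity of $T$. What makes this work is that the outer regularity of $B$ automatically makes $B$ inner regular, so $\RR(B)$ and $\NN(B)$ are closed and complemented, and the subspace equalities then transport these properties to $T$.
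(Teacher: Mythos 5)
Your proof is correct, and while the forward implication matches the paper's, your converse takes a genuinely different route. For $(\ref{thmMO1})\Rightarrow(\ref{thmMO2})$ both you and the paper unpack the two principal-ideal equalities into four factorizations $B=LT$, $B=TU$, $T=VB$, $T=BW$ and read off $\RR(B)=\RR(T)$ and $\NN(B)=\NN(T)$; this part is essentially identical. For the converse, the paper fixes complements $N$ of $\NN(B)$ and $M$ of $\RR(B)$, writes $B$ and $T$ as $2\times 2$ block operators with invertible $(1,1)$ entries relative to these decompositions, and defines the factorizing operators $L,U,V,W$ blockwise. You instead argue algebraically with the idempotents $BA$, $AB$, $TS$, $ST$ (where $S$ is a reflexive inverse of $T$): since $BA$ and $TS$ are idempotents with range $\RR(B)=\RR(T)$, they act as the identity there and give $T=B(AT)$ and $B=T(SB)$; since $I-AB$ and $I-ST$ have range $\NN(B)=\NN(T)$, composing with $T$ and $B$ on the left gives $T=(TA)B$ and $B=(BS)T$. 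All four identities check out, so together with $BAB=B$ they yield the Mary conditions. Both arguments pivot on the same key point, which you identify explicitly: outer regularity of $B$ makes $B$ inner regular, so $\RR(B)$ and $\NN(B)$ are closed and complemented, and the hypothesized subspace equalities transfer this to $T$ --- this is what licenses the block decomposition in the paper and the existence of the reflexive inverse $S$ in your version. Your route buys explicit formulas $L=BS$, $U=SB$, $V=TA$, $W=AT$ and is essentially ring-theoretic (it would carry over to any ring once inner regularity of $T$ is secured), whereas the paper's block-matrix computation is more concrete and sets up the matrix forms that are reused later in the proof of Theorem \ref{thmMaryChar}.
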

\begin{proof}
$\ref{thmMO1}\Rightarrow \ref{thmMO2}.$
Suppose $A$ is Mary invertible along $T$ with Mary inverse $B$. Then $B$ is an outer inverse for $A$ and there exist  $L,U,V,W\in B(X)$ such that 
$$B=LT, \quad T=VB,$$
$$B=TU, \quad T=BW.$$
Then, 
$$\RR(T)=\RR(BW)\subseteq \RR(B)=\RR(TU)\subseteq \RR(T),$$
hence $\RR(T)=\RR(B)$.
On the other hand,
$$\NN(T)\subseteq \NN(LT) =\NN(B)\subseteq \NN(VB)=\NN(T),$$
and $\NN(T)=\NN(B)$.

$\ref{thmMO2}\Rightarrow \ref{thmMO1}.$
Suppose that $B$ is an outer inverse for $A$ and $T$ is such that $\RR(B)=\RR(T)$ and $\NN(B)=\NN(T)$.
Since $B$ is inner regular, there exist closed subspaces $M,N\subset X$ such that
$X=N\oplus \NN(B)$ and $X=\RR(B)\oplus M$, and we have the following matrix form for $B$:
$$B= 
\begin{bmatrix}
B_1&0\\
0&0
\end{bmatrix}
: 
\begin{bmatrix}
N\\
\NN(B)
\end{bmatrix}
\rightarrow
\begin{bmatrix}
\RR(B)\\
M
\end{bmatrix}
$$
with $B_1$ invertible.
Also, since $\RR(T)=\RR(B)$ and $\NN(B)=\NN(T)$, we have that $\NN(T)$ and $\RR(T)$ are closed and complemented subspaces. Thus, $T$ is inner regular and have the following matrix form
with respect to the same decomposition of spaces as above:
$$T= 
\begin{bmatrix}
T_1&0\\
0&0
\end{bmatrix}
: 
\begin{bmatrix}
N\\
\NN(B)
\end{bmatrix}
\rightarrow
\begin{bmatrix}
\RR(B)\\
M
\end{bmatrix}
$$
with $T_1$ invertible.
Now, let the operators $L,V\in B(\RR(B)\oplus M)$ and $U,W\in B(N\oplus\NN(B))$ be defined by
$L= \left[
\begin{smallmatrix}
B_1T_1^{-1}&0\\
0&L_2
\end{smallmatrix}
\right]$
,
$U= \left[
\begin{smallmatrix}
T_1^{-1}B_1&0\\
0&U_2
\end{smallmatrix}
\right]$
,
$V= \left[
\begin{smallmatrix}
T_1B_1^{-1}&0\\
0&V_2
\end{smallmatrix}
\right]$
and
$W=\left[
\begin{smallmatrix}
B_1^{-1}T_1&0\\
0&W_2
\end{smallmatrix}
\right]$, where $L_2,V_2\in B(M)$ and $U_2,W_2\in B(\NN(B))$ are arbitrary operators.
Then, a simple calculation shows that
$LT=B$, $TU=B$, $VB=T$ and $BW=T$.
\end{proof}

Note that the operators $L,U,V,W$ in the proof of the theorem above may not be unique. 
However, from the unicity of the Mary inverse, the operator $B$ is unique. 

\begin{exmp}
Let $X=\ell^2(\mathbb{N})$ the space of square-summable sequences, and let $A,T\in B(X)$ be defined by
$$Ax:=(\tfrac{1}{2}x_2, \tfrac{1}{3}x_3, \tfrac{1}{4}x_4, \ldots),$$
$$Tx:=(0,x_2,x_1,0,0,\ldots).$$
for $x=(x_1,x_2,x_3,x_4,\ldots)$.
Then it is easy to check that $AT$ is group invertible and $(AT)^\sharp x=(3x_2, 2x_1, 0,0, \ldots)$. Also, if $S\in B(X)$ is defined by
$$Sx:=(0,2x_1, 3x_2,0,0,\ldots),$$
we see that $T=SAT$, thus $B(X)T\subset B(X)AT$, and from Theorem \ref{thmMa}, $A$ is Mary invertible along $T$ with Mary inverse $B=T(AT)^\sharp$. We have
$$Bx=(0,2x_1, 3x_2, 0,0,\ldots)$$

Now, define an operator $V\in B(X)$ by
$$Vx:=(0,x_1,x_2,0,0, \ldots),$$
so $\RR(B)=\RR(V)$ and $\NN(B)=\NN(V)$. By the theorem above, $A$ is Mary invertible along $V$ with Mary inverse $B$. Indeed, a straight computation shows that $AV$ is group invertible and $(AV)^\sharp x= (2x_1, 3x_2,0,0,\ldots)$. Also, if $W\in B(X)$ is defined by
$$Wx:=(0,2x_1, 3x_2,0,0,\ldots)$$
we get that $V=WAV$, and from Theorem \ref{thmMa}, $A$ is Mary invertible along $V$  and indeed we have
$$V(AV)^\sharp x=(0,2x_1,3x_2,0,0,\ldots ) = Bx.$$
\end{exmp}

Recall a bounded projection, or simply a projection, is an idempotent operator with closed range. That is, $P\in B(X)$ is a projection if $P=P^2$ and $\RR(P)$ is closed.

Now we give a characterization of the set of operators along which an operator $A$ is Mary invertible.

\begin{thm} \label{thmMaryChar}
Let $A,T\in B(X)$ be nonzero operators. The following statements are equivalent.
\begin{enumerate}
\item $A$ is Mary invertible along $T$.
\item $\RR(T)$ and $\NN(T)$ are closed and complemented subspaces of $X$, $A(\RR(T))=\RR(AT)$ is closed, $\RR(AT)\oplus \NN(T)=X$ and the reduction $A|_{\RR(T)}:\RR(T)\rightarrow \RR(AT)$ is invertible.
\end{enumerate}
\end{thm}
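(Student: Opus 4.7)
The overall plan is to route the statement through Theorem \ref{thmMaryOuter}, which recasts Mary invertibility of $A$ along $T$ as the existence of an outer inverse $B$ of $A$ with $\RR(B)=\RR(T)$ and $\NN(B)=\NN(T)$. So it suffices to match the existence of such a $B$ with the four geometric conditions in (2). Both directions will hinge on the interplay between the outer identity $B=BAB$ and the bounded idempotents $AB$ and $BA$, together with standard Banach-space facts already used in Section 3: a bounded idempotent has closed, complemented range, and a continuous bijection between Banach spaces is a topological isomorphism.

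For (1)$\Rightarrow$(2), I would start from $B=A^{\parallel T}$. Since $B=BAB$, $A$ is an inner inverse of $B$, so $B$ is inner regular, and hence $\RR(B)$ and $\NN(B)$ are closed and complemented; by Theorem \ref{thmMaryOuter} this is the first item. Next I would exploit the bounded idempotent $AB$: its range is $A(\RR(B))=A(\RR(T))=\RR(AT)$, automatically closed, and using $B=BAB$ any $x$ with $ABx=0$ satisfies $Bx=BABx=0$, so $\NN(AB)=\NN(B)=\NN(T)$. The splitting $X=\RR(AB)\oplus\NN(AB)$ then gives $X=\RR(AT)\oplus\NN(T)$. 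Finally, injectivity of $A|_{\RR(T)}$ drops out of the same identity (if $x=Bz\in\RR(B)$ with $Ax=0$, then $z\in\NN(AB)=\NN(B)$, so $x=Bz=0$), while surjectivity onto $\RR(AT)$ is tautological, and the open mapping theorem promotes the reduction to an invertible map.

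For (2)$\Rightarrow$(1), I would construct $B$ explicitly. Let $P$ be the bounded projection of $X$ onto $\RR(AT)$ along $\NN(T)$ (bounded because both summands are closed), and set $B:=(A|_{\RR(T)})^{-1}P$, which is bounded by the hypothesis on the reduction. Direct checks give $\RR(B)=\RR(T)$, $\NN(B)=\NN(T)$, and $BAB=B$: for any $y$, set $v:=By\in\RR(T)$; then $Av\in\RR(AT)$, so $PAv=Av$ and $BAv=(A|_{\RR(T)})^{-1}(Av)=v=By$. Theorem \ref{thmMaryOuter} then identifies $B$ as $A^{\parallel T}$.

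The main obstacle I expect is the nullspace bookkeeping: pinning down $\NN(AB)=\NN(B)=\NN(T)$ is what makes the decomposition $X=\RR(AT)\oplus\NN(T)$ fall out of the idempotent $AB$, and this step is the only place where the outer identity $B=BAB$ (as opposed to mere inner regularity) is genuinely used. Everything else is routine manipulation of bounded idempotents and topological direct sums once this identification is in hand.
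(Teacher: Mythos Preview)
Your proposal is correct and follows essentially the same route as the paper: both directions are routed through Theorem~\ref{thmMaryOuter}, and the $(1)\Rightarrow(2)$ argument is identical down to the verification that $\NN(AB)=\NN(B)$ via $B=BAB$. In $(2)\Rightarrow(1)$ the paper writes $A$ and the candidate $B$ as $2\times 2$ block matrices with respect to $\RR(T)\oplus M\to\RR(AT)\oplus\NN(T)$, while you define $B=(A|_{\RR(T)})^{-1}P$ directly; these are the same operator, so the difference is purely presentational.
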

\begin{proof}
Suppose $A$ is Mary invertible along $T$ with Mary inverse $B\in B(X)$. Then, from Theorem \ref{thmMaryOuter}, $B$ is an outer inverse for $A$ such that $\RR(B)=\RR(T)$ and $\NN(B)=\NN(T)$. Since $A$ is an inner inverse for $B$, $\RR(B)$ and $\NN(B)$ (and thus $\RR(T)$ and $\NN(T)$) are closed and complemented subspaces of $X$. Furthermore, $I-AB$ is a projection from $X$ on $\NN(B)=\NN(T)$, thus $X=\RR(AB)\oplus \NN(T)$, and since $\RR(AB)=A(\RR(B))=A(\RR(T))=\RR(AT)$ we have that $\RR(AT)$ is closed and $X=\RR(AT)\oplus \NN(T)$.
Now, for the invertibility of $A|_{\RR(T)}:\RR(T)\rightarrow \RR(AT)$  it is clear that it is onto. To see that $A|_{\RR(T)}$ is also $1-1$ on $\RR(T)$, suppose that there exists $x\in \RR(T)$ such that $Ax=0$. Since $x\in \RR(T)=\RR(B)$, there exists $y\in X$ such that $By=x$. Then we have $0=Ax=BAx=BABy=By$ and thus $x=0$. Therefore $A_{\RR(T)}$ is $1-1$ and onto, and hence invertible.

Conversely, suppose that $\RR(T)$ and $\NN(T)$ are closed and complemented subspaces of $X$, $X=\RR(AT)\oplus \NN(T)$, and the reduction $A|_{\RR(T)}:\RR(T)\rightarrow \RR(AT)$ is invertible. Let $M$ be the complement of $\RR(T)$, so $X=\RR(T)\oplus M$. Then $A$ has the following matrix form with respect to these decompositions of spaces:
\begin{equation}\label{eqnMatrixA}
A=
\begin{bmatrix}
A_1&A_3\\
A_4&A_2
\end{bmatrix}
:
\begin{bmatrix}
\RR(T)\\
M
\end{bmatrix}
\rightarrow
\begin{bmatrix}
\RR(AT)\\
\NN(T)
\end{bmatrix}
.
\end{equation}
Since $A$ maps $\RR(T)$ to $\RR(AT)$, and $A_4:\RR(T)\rightarrow \NN(T)$, it follows that $A_4=0$ and we have that $A_1=A|_{\RR(T)}$ is invertible.
Now, let $B$ be the operator defined by
\begin{equation}\label{eqnMatrixB}
B=
\begin{bmatrix}
A_1^{-1}&0\\
0&0
\end{bmatrix}
:
\begin{bmatrix}
\RR(AT)\\
\NN(T)
\end{bmatrix}
\rightarrow
\begin{bmatrix}
\RR(T)\\
M
\end{bmatrix}
.
\end{equation}
A direct verification shows that $BAB=B$, $\RR(B)=\RR(T)$ and $\NN(B)=\NN(T)$. Thus, by Theorem \ref{thmMaryOuter}, $B$ is the Mary inverse of $A$ along $T$. Therefore, $A$ is Mary invertible along $T$.
\end{proof}

If $A$ is Mary invertible along $T$, from the proof of the previous theorem we know that $A$ has the matrix form of (\ref{eqnMatrixA}) with $A_4=0$. We claim that also $A_3=0$. Indeed, again from the proof of the theorem above, we have that if $B$ is the Mary inverse of $A$ along $T$, then $B$ has the matrix form of (\ref{eqnMatrixB}). Since $BA$ is the projection from $X$ on $\RR(B)=\RR(T)$, from the matrix form
$$
BA=
\begin{bmatrix}
I&A_1^{-1}A_3\\
0&0
\end{bmatrix}
:
\begin{bmatrix}
\RR(T)\\
M
\end{bmatrix}
\rightarrow
\begin{bmatrix}
\RR(T)\\
M
\end{bmatrix}
,
$$
we see that $A_1^{-1}A_3=0$, and it follows that $A_3=0$. Hence, we have the following:

\begin{cor}
Let $A$ be Mary invertible along $T$. Then $A$ has the following matrix form:
\begin{equation}\label{eqnMatrixAF}
A=
\begin{bmatrix}
A_1&0\\
0&A_2
\end{bmatrix}
:
\begin{bmatrix}
\RR(T)\\
M
\end{bmatrix}
\rightarrow
\begin{bmatrix}
\RR(AT)\\
\NN(T)
\end{bmatrix}
,
\end{equation}
with $A_1$ invertible.
\end{cor}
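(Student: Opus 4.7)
The plan is to execute the argument sketched in the paragraph preceding the corollary, which is essentially complete. By Theorem \ref{thmMaryChar} together with the discussion inside its proof, I already have the matrix form
$$
A = \begin{bmatrix} A_1 & A_3 \\ 0 & A_2 \end{bmatrix} : \RR(T) \oplus M \to \RR(AT) \oplus \NN(T),
$$
with $A_1 = A|_{\RR(T)}$ invertible; the block in the $(2,1)$ position already vanished because $A$ sends $\RR(T)$ into $\RR(AT)$. Everything therefore reduces to showing that $A_3 = 0$ for a suitable choice of complement $M$ of $\RR(T)$.

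To do this I would pull in Theorem \ref{thmMaryOuter} to identify the Mary inverse $B := A^{\parallel T}$ as an outer inverse of $A$ satisfying $\RR(B) = \RR(T)$ and $\NN(B) = \NN(T)$. The nullspace condition annihilates the $\NN(T)$-column of $B$, the range condition kills the $M$-row of $B$, and $BAB = B$ together with invertibility of $A_1$ pins the remaining corner to $A_1^{-1}$, yielding the shape (\ref{eqnMatrixB}). Multiplying then produces
$$
BA = \begin{bmatrix} I & A_1^{-1} A_3 \\ 0 & 0 \end{bmatrix} : \RR(T) \oplus M \to \RR(T) \oplus M.
$$
Because $BAB = B$, the operator $BA$ is idempotent with range $\RR(B) = \RR(T)$; if I choose $M := \NN(BA) = A^{-1}(\NN(T))$—easily verified to be a topological complement of $\RR(T)$ using invertibility of $A_1$ and the decomposition $X = \RR(AT) \oplus \NN(T)$—then the kernel of this projection is exactly $M$, so its matrix collapses to $\begin{bmatrix} I & 0 \\ 0 & 0 \end{bmatrix}$. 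This forces $A_1^{-1} A_3 = 0$, and invertibility of $A_1$ gives $A_3 = 0$.

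The main point of care—and what I would flag as the only real subtlety—is that the diagonalization is \emph{not} automatic for an arbitrary complement $M$ of $\RR(T)$: the block $A_3$ genuinely depends on this choice. The right choice is $M = \NN(BA)$, i.e.\ the kernel of the projection $BA$, which is what aligns the domain decomposition of $A$ with the codomain decomposition $\RR(AT) \oplus \NN(T)$ and produces the block-diagonal form (\ref{eqnMatrixAF}).
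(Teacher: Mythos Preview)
Your argument is correct and follows the same line as the paper: obtain the upper-triangular matrix of $A$ from the proof of Theorem~\ref{thmMaryChar}, write $B$ in the form (\ref{eqnMatrixB}), compute $BA$, and use that $BA$ is the projection onto $\RR(T)$ to kill $A_1^{-1}A_3$. Your explicit identification of the complement as $M=\NN(BA)$ is in fact more careful than the paper, which leaves $M$ arbitrary in the proof of Theorem~\ref{thmMaryChar} and then tacitly assumes $BA$ is diagonal with respect to $\RR(T)\oplus M$; you are right that this only holds for the particular choice $M=\NN(BA)$, and the corollary should be read as asserting the block-diagonal form for that specific complement.
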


The Mary inverse along an element of a semigroup has been studied in a series of papers by Xavier Mary and Pedro Patricio. It is clear that the Mary inverse generalizes the usual inverse, and it was proved in \cite{Ma12} that this inverse generalizes the group, Moore-Penrose and Drazin inverse.

Let $S$ be a *-semigroup and $a\in S$. Recall the Moore-Penrose inverse of $a$ is an element $b$ such that
$$aba=a, \quad bab=b, \quad (ab)^*=ab, \quad (ba)^*=ba.$$
The Moore-Penrose inverse is unique if it exists, and will be denoted by $a^\dag$.

\begin{thm}[\cite{Ma12}, Theorem 11] Let $S$ be a semigroup and $a\in S$.
\begin{enumerate}
\item $a$ is group invertible if and only if it is Mary invertible along $a$. In this case the Mary inverse along $a$ is inner and coincides with the group inverse.
\item $a$ is Drazin invertible if and only if it is Mary invertible along some $a^m$, $m\in \mathbb{N}$, and in this case the two inverses coincide.
\item If $S$ is a *-semigroup, $a$ is Moore-Penrose invertible if and only if it is invertible along $a^*$. In this case the inverse along $a^*$ is inner and coincides with the Moore-Penrose inverse.
\end{enumerate}
\end{thm}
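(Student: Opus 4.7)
The plan is to apply Theorem \ref{thmMa} in each case, reducing Mary invertibility along $d$ to the existence of $(ad)^\sharp$ together with $Sd \subset Sad$, and then to use uniqueness of the Mary, group, Drazin, and Moore-Penrose inverses to identify the two objects with minimal computation. For part (1), take $d = a$. The forward direction is immediate: if $a^\sharp$ exists, then $a^\sharp a a^\sharp = a^\sharp$, while $a = a^2 a^\sharp = a^\sharp a^2$ and $a^\sharp = a(a^\sharp)^2 = (a^\sharp)^2 a$ give $aS = a^\sharp S$ and $Sa = Sa^\sharp$. For the converse, let $b = a^{\parallel a}$, pick $s, u \in S$ with $a = bs$ and $a = ub$, and use $b = bab$ to collapse $a = bs = (bab)s = ba \cdot bs = ba^2$ and symmetrically $a = a^2 b$. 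Since $a \in a^2 S \cap Sa^2$ is the standard characterization of group invertibility in a monoid, $a^\sharp$ exists; uniqueness of the Mary inverse forces $b = a^\sharp$, and the axiom $a a^\sharp a = a$ makes $b$ inner.

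For part (2), in the forward direction let $k$ be the Drazin index and $d = a^k$. The Drazin identities $a^{k+1} a^D = a^k$, $a a^D = a^D a$, and $a^D a a^D = a^D$ yield $a^D = (a^D)^{k+1} a^k = a^k (a^D)^{k+1}$ and $a^k = a^D a^{k+1} = a^{k+1} a^D$, so $Sa^D = Sa^k$ and $a^D S = a^k S$, showing $a^D = a^{\parallel a^k}$. Conversely, if $a^{\parallel a^m}$ exists, then Theorem \ref{thmMa} delivers $(a^{m+1})^\sharp$, and the equivalence (noted in the excerpt) of Drazin invertibility of $a$ with group invertibility of some power of $a$ gives $a^D$; uniqueness, combined with the forward direction applied to $a^D$, then identifies the two inverses.

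For part (3), take $d = a^*$. If $a^\dagger$ exists, the Moore-Penrose identities $(aa^\dagger)^* = aa^\dagger$ and $(a^\dagger a)^* = a^\dagger a$ translate into $a^\dagger = a^\dagger (a^\dagger)^* a^* \in Sa^*$, $a^* = a^*(aa^\dagger) = (a^*a)a^\dagger \in Sa^\dagger$, and symmetric right-ideal statements, so $a^\dagger = a^{\parallel a^*}$. Conversely, write $b = a^{\parallel a^*}$ and $c = (aa^*)^\sharp$, so $b = a^* c$ by Theorem \ref{thmMa}, and from $a^* \in S(aa^*)$ one obtains $a = aa^* w^*$ for some $w \in S$ by applying $*$. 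The decisive observation is that $c^* = c$: since $(aa^*)^* = aa^*$, the element $c^*$ also satisfies the three group-inverse equations for $aa^*$, and uniqueness of the group inverse forces $c^* = c$. Using this and the commutativity $caa^* = aa^* c$, one checks $aba = aa^* c \cdot aa^* w^* = (aa^* c aa^*) w^* = aa^* w^* = a$, $(ab)^* = (aa^* c)^* = c \cdot aa^* = aa^* c = ab$, and $(ba)^* = (a^* c a)^* = a^* c a = ba$; combined with $bab = b$ from the Mary condition, these give all four Moore-Penrose equations, so $b = a^\dagger$.

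The main obstacle is establishing the self-adjointness of $(aa^*)^\sharp$ in part (3), on which both adjoint MP identities depend; I would isolate it as a short lemma from uniqueness of the group inverse, after which the rest of (3) is short algebra on the group-inverse equations for $aa^*$ together with the factorization $a = aa^* w^*$. Parts (1) and (2) need no direct verification of the classical inverse equations, since everything there reduces to Theorem \ref{thmMa} and the uniqueness of the respective inverses.
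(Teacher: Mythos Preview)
The paper does not give a proof of this theorem; it is quoted from \cite{Ma12} as background, with no argument supplied. So there is nothing in the paper to compare your proposal against, and I can only assess it on its own terms.

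Parts (1) and (3) are correct. In (1) the reduction to $a\in a^2S\cap Sa^2$ is clean, and in (3) the self-adjointness lemma $((aa^*)^\sharp)^*=(aa^*)^\sharp$, obtained from uniqueness of the group inverse, is exactly the right device; once you have it and the factorization $a=aa^*w^*$, the four Moore--Penrose equations fall out as you wrote.

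Part (2) has a soft spot. In the converse you obtain $a^D$ from group invertibility of $a^{m+1}$ and then write that ``uniqueness, combined with the forward direction applied to $a^D$, identifies the two inverses''. But your forward computation only yields $a^D=a^{\parallel a^k}$ for $k$ equal to the Drazin index; it does not, as stated, give $a^D=a^{\parallel a^m}$ for the particular $m$ you started with. What is missing is the short argument that existence of $a^{\parallel a^m}$ forces $a^m\in a^{m+1}S\cap Sa^{m+1}$ (from Theorem~\ref{thmMa} and its left--right symmetric form), hence $a^mS=a^{m+1}S$ and $Sa^m=Sa^{m+1}$, so the Drazin index is at most $m$; then for every $m$ at or above the index one has $a^mS=a^kS=a^DS$ and $Sa^m=Sa^k=Sa^D$, and the Mary inverse along $a^m$ coincides with that along $a^k$. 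This is a two-line patch, but without it the identification step is not justified.
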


Let $H$ be a Hilbert space and $B(H)$ the algebra of bounded linear operators on $H$.
Recall that we denote by $A^\sharp$, $A^d$ and $A^\dag$ the group, Drazin, and Moore-Penrose inverse of $A$, respectively. Using Theorem \ref{thmMaryOuter} we get at once the following:

\begin{cor}
Let $A$ be Mary invertible along $T$ with Mary inverse $B$.
\begin{enumerate}
\item If $T$ is invertible, $B=A^{-1}$.
\item If $\RR(T)=\RR(A)$, $\NN(T)=\NN(A)$, then $B=A^\sharp$.
\item If $\RR(T)=\RR(A^n)$, $\NN(T)=(A^n)$, then $B=A^d$.
\item If $A,T\in B(H)$, $\RR(T)=\RR(A^*)$, $\NN(T)=\NN(A^*)$, then $B=A^\dag$.
\end{enumerate}
\end{cor}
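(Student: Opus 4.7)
The whole corollary is meant to be an immediate consequence of Theorem \ref{thmMaryOuter}. The key observation I would emphasise first is a direct corollary of that theorem: whenever $T$ and $T'$ satisfy $\RR(T)=\RR(T')$ and $\NN(T)=\NN(T')$, we have $A^{\parallel T}=A^{\parallel T'}$ whenever either exists, because the Mary inverse is uniquely characterised as the outer inverse of $A$ with the prescribed range and nullspace. Once this is in hand, each item reduces to identifying which classical outer inverse has those data.

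For item 1 I would argue straight from Theorem \ref{thmMaryOuter}: $T$ invertible forces $\RR(B)=X$ and $\NN(B)=\{0\}$, so $B$ is a bijection. Cancelling $B$ on either side of $BAB=B$ then gives $BA=I=AB$, hence $B=A^{-1}$.

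Items 2--4 I would handle uniformly, by combining the observation above with the Mary--Patricio theorem just recalled, which identifies the Mary inverse along $A$, along $A^m$, and (in the Hilbert case) along $A^*$ with $A^\sharp$, $A^d$, and $A^\dag$ respectively. For item 2, the hypothesis $\RR(T)=\RR(A)$ and $\NN(T)=\NN(A)$ yields $B=A^{\parallel T}=A^{\parallel A}=A^\sharp$. Items 3 and 4 are the same substitution, with $A^n$ and $A^*$ in place of $A$.

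Nothing here is truly an obstacle, but there is one small bookkeeping point worth checking in item 3: the cited Mary--Patricio result phrases Drazin invertibility as Mary invertibility along \emph{some} power $A^m$, while the corollary hands us a specific $n$. This is harmless, since the observation at the start reduces Mary invertibility along $T$ to Mary invertibility along $A^n$, and this already supplies the required $m$ (namely $n$); the equivalence then gives Drazin invertibility and $B=A^d$.
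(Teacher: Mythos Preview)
Your proposal is correct and matches the paper's intended approach. The paper gives no detailed proof---it simply states that the corollary follows ``at once'' from Theorem~\ref{thmMaryOuter}, having just quoted the Mary--Patr\'icio theorem from \cite{Ma12}; your write-up makes explicit exactly the combination of those two ingredients (range/nullspace determines the Mary inverse, and the Mary inverse along $A$, $A^n$, $A^*$ recovers $A^\sharp$, $A^d$, $A^\dag$) that the paper leaves to the reader.
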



For instance, for a group invertible operator $A$,  since $\RR(A)=\RR(A^\sharp)=\RR(A^\sharp A)=\RR(A A^\sharp)$ and $\NN(A)=\NN(A^\sharp)=\NN(A^\sharp A)=\NN(A A^\sharp)$, we have that the Mary inverse of $A$ along $A$, $A^\sharp$, $A^\sharp A$ or $AA^\sharp$ is the group inverse.

\section{
Spectral sets}


Recall the spectrum of an operator $A\in B(X)$ is the set
$$\sigma(A):=\{\lambda \in \mathbb{C}: A-\lambda I \text{ is not invertible}\},$$
the resolvent set is $\rho(A):=\mathbb{C}\setminus \sigma(A)$ and for $\lambda\in \rho(A)$ the resolvent function is
$$R_\lambda(A):=(A-\lambda I)^{-1}.$$


A subset $\Lambda \subset \sigma(A)$ is an spectral set if $\Lambda$ and $\sigma(A)\setminus \Lambda$ are both closed in $\mathbb{C}$. For a spectral set $\Lambda$ for $A$, the spectral projection associated with $A$ and $\Lambda$ is defined by
$$P_\Lambda(A):=-\frac{1}{2\pi i}\int_C R_\lambda(A)d\lambda,$$
where $C$ is a Cauchy contour that separates $\Lambda$ from $\sigma(A)\setminus\Lambda$. 


Let $Q\in B(X)$. If $\sigma(Q)=\{0\}$, then we say that $Q$ is a quasinilpotent operator. Recall that $Q$ is nilpotent if $Q^n=O$ for some $n\in \mathbb{N}$, and nilpotent operators are quasinilpotent.

Let $A\in B(X)$, the quasinilpotent part of $A$ is the set
$$H_0(A):=\{x\in X: \lim_{n\rightarrow \infty}\|A^nx\|^{1/n}\}=0.$$
Of course, $A$ is quasinilpotent if and only if $H_0(A)=X$.

The analytical core of $A$ is the set $K(A)$ of all $x\in X$ such that there exists a sequence $(u_n)\subset X$ and a constant $c>0$ such that:
\begin{enumerate}
\item $x=u_0$, and $Au_{n+1}=u_n$ for every $n\in \mathbb{Z}_+$,
\item $\|u_n\|\leq c^n\|x\|$ for every $n\in \mathbb{Z}_+$.
\end{enumerate}

We have that $K(A)$ is a  subspace of $X$ and $A(K(A))=K(A)$.

If $0$ is a point of the resolvent set or an isolated point of the spectrum $\sigma(A)$, then the operator $A$ is called quasipolar. 
Let $A$ be quasipolar and let $P_0$ be the spectral projection associated with the spectral set $\{0\}$, then \cite[Theorem 3.74]{AienaBook}:
$$\RR(P_0)=H_0(A)
\quad \text{and}\quad
\NN(P_0)=K(A).$$
Thus, $H_0(A)$ and $K(A)$ are closed and we have the following decomposition:
$$X=K(A)\oplus H_0(A).$$


Quasipolar operators are generalized invertible in the sense of Koliha:
an operator $A\in B(X)$ is Koliha-Drazin invertible if there exists $B\in B(X)$ such that
\begin{equation}
A(I-AB) \text{ is quasinilpotent}, \quad B=BAB, \quad AB=BA.
\end{equation}

An operator $A$ is Koliha-Drazin invertible if and only if $0$ is isolated in $\sigma(A)$.
If $0\in \sigma(A)$ is a pole of the resolvent of order $n$, then $A$ is Drazin invertible with Drazin index $n$. If $0$ is a simple pole then it is group invertible.

Now we can offer another proof that the Mary inverse generalizes the Koliha-Drazin inverse (cf. \cite[Theorem 8 and Theorem 12]{Ma12}).

\begin{prop}
$A$ is Koliha-Drazin invertible if and only if it is Mary invertible along $T$, where $T$ is such that $\RR(T)=K(A)$ and $\NN(T)=H_0(A)$. Furthermore, the two inverses coincide if they exist.
\end{prop}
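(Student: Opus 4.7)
The plan is to reduce the proposition to Theorem~\ref{thmMaryChar} combined with the spectral properties of $K(A)$ and $H_0(A)$ recalled just above the statement. The structural facts I will use are: $A(K(A))=K(A)$ for every $A\in B(X)$; $H_0(A)$ is always $A$-invariant; and for $A$ quasipolar, $X=K(A)\oplus H_0(A)$ with $A|_{K(A)}$ invertible and $A|_{H_0(A)}$ quasinilpotent (the latter being a standard fact in Aiena's monograph already cited).

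For the ($\Rightarrow$) implication I would assume $A$ is Koliha-Drazin invertible, equivalently quasipolar, and pick any $T$ with $\RR(T)=K(A)$ and $\NN(T)=H_0(A)$ (for example $T=I-P_0$, where $P_0$ is the spectral projection of $A$ at $\{0\}$). Then the hypotheses of Theorem~\ref{thmMaryChar} are immediate: closedness and complementarity of $\RR(T)$ and $\NN(T)$ come from quasipolarity; $\RR(AT)=A(K(A))=K(A)$ is closed; $\RR(AT)\oplus\NN(T)=K(A)\oplus H_0(A)=X$; and $A|_{\RR(T)}=A|_{K(A)}\colon K(A)\to K(A)=\RR(AT)$ is invertible. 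Hence $A$ is Mary invertible along $T$.

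For the ($\Leftarrow$) direction I would suppose $A$ is Mary invertible along such a $T$ and apply Theorem~\ref{thmMaryChar} backwards: it yields that $K(A)$ and $H_0(A)$ are closed and complemented, $X=\RR(AT)\oplus\NN(T)=K(A)\oplus H_0(A)$, and $A|_{K(A)}$ is invertible. Since $A|_{H_0(A)}$ is quasinilpotent on the closed invariant subspace $H_0(A)$, the spectral decomposition gives $\sigma(A)=\sigma(A|_{K(A)})\cup\sigma(A|_{H_0(A)})\subset\sigma(A|_{K(A)})\cup\{0\}$ with $0\notin\sigma(A|_{K(A)})$; so $0$ is either absent from or isolated in $\sigma(A)$, and $A$ is quasipolar, hence Koliha-Drazin invertible. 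To identify the inverses I would use the explicit matrix form of the Mary inverse produced in the proof of Theorem~\ref{thmMaryChar}: choosing the complement $M$ of $\RR(T)=K(A)$ to be $H_0(A)$, the Mary inverse acts as $A|_{K(A)}^{-1}$ on $K(A)$ and as $0$ on $H_0(A)$, which is exactly the definition of the Koliha-Drazin inverse of $A$.

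The main obstacle is the quasinilpotence of $A|_{H_0(A)}$ in the converse direction; this is essentially classical (via the spectral radius formula applied on the closed invariant subspace $H_0(A)$) but must be invoked carefully from the literature rather than proved in place. Once that ingredient is taken for granted, the argument is a direct translation between the conditions of Theorem~\ref{thmMaryChar} and the spectral decomposition associated to a quasipolar operator.
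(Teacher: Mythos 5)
Your proposal is correct and follows essentially the same route as the paper: both directions reduce to Theorem~\ref{thmMaryChar} via the facts $A(K(A))=K(A)$ and $X=K(A)\oplus H_0(A)$, and both identify the two inverses through the block-diagonal matrix forms relative to $K(A)\oplus H_0(A)$. The only divergence is in the converse, where the paper simply cites \cite[Theorem 3.76]{AienaBook} to pass from the closed decomposition $X=K(A)\oplus H_0(A)$ to quasipolarity, whereas you sketch a proof of that same cited fact via the quasinilpotence of $A|_{H_0(A)}$ and the splitting of the spectrum.
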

\begin{proof}
Suppose $A$ is Koliha-Drazin invertible. Then $0$ is an isolated point of the spectrum, thus $K(A)$ and $H_0(A)$ are closed and $X=K(A)\oplus H_0(A)$. Let $T$ be the projection onto $K(A)$ parallell to $H_0(A)$. Then, $\RR(T)=K(A)$ and $\NN(T)=H_0(A)$ are closed and complemented, $\RR(AT)=A(K(A))=K(A)$ is closed, $\RR(AT)\oplus \NN(T)=K(A)\oplus H_0(A)=X$, and since $A(K(A))=K(A)$ (\cite[Theorem 1.21(ii)]{AienaBook}) and $\NN(A)\cap K(A)=\{0\}$ (\cite[Remark 2.4 (d) and Theorem 2.22(iii)]{AienaBook}), we have that $A|_{\RR(T)}$ is invertible. Therefore, from Theorem \ref{thmMaryChar} it follows that $A$ is Mary invertible along $T$.

Conversely, suppose $A$ is Mary invertible along $T$, with $\RR(T)=K(A)$ and $\NN(T)=H_0(A)$. From Theorem \ref{thmMaryChar} it follows that $K(A)$ and $H_0(A)$ are closed. Since $\RR(AT)=A(\RR(T))=A(K(A))=K(A)$, also from Theorem \ref{thmMaryChar}, we have that $X=K(A)\oplus H_0(A)$. Then, from \cite[Theorem 3.76]{AienaBook}, $0$ is an isolated point of $\sigma(A)$. Therefore $A$ is Koliha-Drazin invertible.

Now  suppose that $A$ is Koliha-Drazin invertible and also Mary invertible along $T$. Since $A$ is Koliha-Drazin invertible, $0$ is an isolated point of $\sigma(A)$. Let $P$ be the spectral projection associated with the spectral set $\{0\}$. From \cite[Theorem 3.74]{AienaBook} we have that $\RR(P)=H_0(A)$ and $\NN(P)=K(A)$. Now, $A$ and $B=A^{\parallel T}$ have the following  matrix form (\ref{eqnMatrixB},\ref{eqnMatrixAF}):
$$
A=
\begin{bmatrix}
A_1&0\\
0&A_2
\end{bmatrix}
:
\begin{bmatrix}
K(A)=\RR(I-P)\\
H_0(A)=\NN(I-P)
\end{bmatrix}
\rightarrow
\begin{bmatrix}
K(A)\\
H_0(A)
\end{bmatrix}
,
$$

$$
B=
\begin{bmatrix}
A_1^{-1}&0\\
0&0
\end{bmatrix}
:
\begin{bmatrix}
K(A)=\RR(I-P)\\
H_0(A)=\NN(I-P)
\end{bmatrix}
\rightarrow
\begin{bmatrix}
K(A)\\
H_0(A)
\end{bmatrix}
$$
Which is the same as the matrix form of the Koliha-Drazin inverse.
Therefore, $A^{\parallel T }=A^D$.


\end{proof}



As noted above, the Koliha-Drazin is a particular case when we consider the spectral set $\Lambda=\{0\}$. For the general case when $\Lambda$ is a spectral set such that $0\in \Lambda$, Dajic and Koliha have defined a generalized inverse and studied its properties \cite{DajicKoliha}.

\begin{thm}
Let $A\in B(X)$ and $\Lambda$ an spectral set for $A$. If $0\notin \Lambda$ then $A$ is Mary invertible along $P_\Lambda(A)$.
\end{thm}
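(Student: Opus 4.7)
The plan is to verify the hypotheses of Theorem \ref{thmMaryChar} for the element $T := P_\Lambda(A)$. Because $P_\Lambda(A)$ is a bounded idempotent on $X$, it automatically produces the direct sum decomposition $X=\RR(P_\Lambda(A))\oplus \NN(P_\Lambda(A))$ with both summands closed and complemented; this already takes care of the requirement that $\RR(T)$ and $\NN(T)$ be closed and complemented.

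Next I would invoke two standard facts from the holomorphic functional calculus applied to $A$ and the spectral set $\Lambda$: both $\RR(P_\Lambda(A))$ and $\NN(P_\Lambda(A))$ are $A$-invariant, and the restrictions satisfy $\sigma\bigl(A|_{\RR(P_\Lambda(A))}\bigr)=\Lambda$ and $\sigma\bigl(A|_{\NN(P_\Lambda(A))}\bigr)=\sigma(A)\setminus \Lambda$. Since by hypothesis $0\notin \Lambda$, the restriction $A|_{\RR(P_\Lambda(A))}$ has $0$ outside its spectrum and is therefore invertible as a bounded operator from $\RR(P_\Lambda(A))$ onto itself.

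From this invertibility I conclude $\RR(AP_\Lambda(A))=A(\RR(P_\Lambda(A)))=\RR(P_\Lambda(A))$, which is closed. Consequently $A(\RR(T))=\RR(AT)$ is closed, and the splitting $\RR(AT)\oplus \NN(T)=\RR(P_\Lambda(A))\oplus \NN(P_\Lambda(A))=X$ is immediate. The reduction $A|_{\RR(T)}:\RR(T)\to \RR(AT)$ is then nothing but the already-invertible map $A|_{\RR(P_\Lambda(A))}$ considered as a map from $\RR(P_\Lambda(A))$ to itself. All four conditions of Theorem \ref{thmMaryChar} are thus satisfied, and $A$ is Mary invertible along $P_\Lambda(A)$.

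The main obstacle is not in the verification itself, which is essentially bookkeeping once Theorem \ref{thmMaryChar} is in hand; rather, it lies in cleanly marshalling the standard spectral-decomposition facts (namely that $P_\Lambda(A)$ is a bounded projection, that it splits $X$ into two $A$-invariant closed summands, and that the spectrum of $A$ on $\RR(P_\Lambda(A))$ is exactly $\Lambda$). No delicate estimates, no contour deformations and no new computations beyond these textbook facts are required.
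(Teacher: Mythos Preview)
Your proposal is correct and follows essentially the same route as the paper: you verify the hypotheses of Theorem \ref{thmMaryChar} for $T=P_\Lambda(A)$ by using the standard facts that $P_\Lambda(A)$ is a bounded projection with $A$-invariant range, that $\sigma(A|_{\RR(P_\Lambda(A))})=\Lambda$, and that $0\notin\Lambda$ forces invertibility of the restriction, whence $\RR(AT)=\RR(T)$ and the required decomposition follows. The paper's own proof is precisely this argument, only slightly more terse.
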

\begin{proof}
Let $P=P_\Lambda(A)$. Then $\RR(P)$ and $\NN(P)$ are closed and $X=\RR(P)\oplus \NN(P)$.
Now, since $\RR(P)$ is $A$-invariant, $\sigma(A|_{\RR(P)})=\Lambda$ and $0\notin \Lambda$ we have that
$A|_{\RR(P)}:\RR(P)\rightarrow \RR(P)$
is invertible.
Thus, $\RR(AP)=\RR(P)$ is closed, $\RR(AP)\oplus N(P) = X$ and $A|_{\RR(P)}:\RR(P)\rightarrow \RR(AP)$ is invertible.
Therefore, by Theorem \ref{thmMaryChar}, $A$ is Mary invertible along $P$.
\end{proof}

\begin{cor}
Let $A\in B(X)$ and $\Lambda$ an spectral set for $A$. If $0\in \Lambda$ then $A$ is Mary invertible along $I-P_\Lambda(A)$.
\end{cor}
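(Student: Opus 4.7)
The plan is to reduce this corollary to the preceding theorem by exchanging the roles of $\Lambda$ and its complement in the spectrum. First I would set $\Lambda':=\sigma(A)\setminus\Lambda$ and observe that the definition of a spectral set is symmetric in the two pieces, so $\Lambda'$ is itself a spectral set for $A$. Since by hypothesis $0\in\Lambda$, we automatically have $0\notin\Lambda'$, so the previous theorem applies to $\Lambda'$ and yields that $A$ is Mary invertible along $P_{\Lambda'}(A)$.

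The remaining task is to identify $P_{\Lambda'}(A)$ with $I-P_\Lambda(A)$. I would argue this via the Riesz holomorphic functional calculus: if $C$ and $C'$ are Cauchy contours separating $\Lambda$ from $\Lambda'$ and $\Lambda'$ from $\Lambda$ respectively, chosen so that their union encloses all of $\sigma(A)$ exactly once, then
$$P_\Lambda(A)+P_{\Lambda'}(A)=-\frac{1}{2\pi i}\int_{C\cup C'}R_\lambda(A)\,d\lambda=I,$$
which is the functional-calculus image of the constant function $1$ on a neighbourhood of $\sigma(A)$. Hence $P_{\Lambda'}(A)=I-P_\Lambda(A)$.

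Combining these two steps, the previous theorem applied to $\Lambda'$ gives that $A$ is Mary invertible along $P_{\Lambda'}(A)=I-P_\Lambda(A)$, which is precisely the statement of the corollary. There is no real obstacle; the only point requiring attention is the sign and orientation convention in the integral, so that $P_\Lambda(A)+P_{\Lambda'}(A)$ comes out to $I$ rather than $-I$ or $0$, but this is guaranteed by the sign convention already fixed in the definition of $P_\Lambda(A)$ in the introduction of this section.
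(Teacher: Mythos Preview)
Your proposal is correct and follows essentially the same route as the paper: pass to the complementary spectral set $\Lambda'=\sigma(A)\setminus\Lambda$, note that $0\notin\Lambda'$, apply the preceding theorem, and use $P_{\Lambda'}(A)=I-P_\Lambda(A)$. The only difference is that you spell out the functional-calculus justification for this last identity, whereas the paper simply invokes it as a known fact.
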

\begin{proof}
If $0\in \Lambda$, then $0\notin \sigma(A)\setminus\Lambda$. From the theorem above, $A$ is Mary invertible along $P_{\sigma(A)\setminus\Lambda}(A)=I-P_\Lambda (A)$.
\end{proof}

It was shown in \cite{Ma12} that if a projection $P$ commutes with $A$ and $A$ is Mary  invertible along $P$, then $\RR(P)\subset K(T)$.
In fact, for $r>0$, $r\in \mathbb{R}$, let 
$$K_r(A):=\{x_0\in X:(\exists x_n)(\forall n\in \mathbb{N}) Ax_n=x_{n-1}, \lim\sup_{n\rightarrow\infty}\|x_n\|^{1/n}<r^{-1} \},$$
 Koliha and Poon have shown that if $\Lambda=\{\lambda:|\lambda |<r\}\cap \sigma(A)$ is a spectral set 
such that
the circle  $C_r=\{\lambda:|\lambda|=r\}$
separates $\Lambda$ from $\sigma(A)\setminus \Lambda$
, then $\NN(P_\Lambda (A))=K_r(A)\subset K(A)$ \cite[Theorem 5.6]{KolihaPoon}.
Of course, from the corollary above, $A$ is Mary invertible along $I-P_\Lambda(A)$.

%


\noindent Gabriel Kant\'un-Montiel \\
Centro de Investigaci\'on en Matem\'aticas, \\ 
Jalisco S/N, Valenciana 36240, \\
Guanajuato, Gto., M\'exico. \\
Email: gabriel.kantun@cimat.mx

\end{document}